\documentclass{article}

\textwidth=16. true cm
\textheight=23 true cm
\voffset=-1.55 true cm
\hoffset = -1.7 true cm

\usepackage{amscd}
\usepackage{amssymb} 
\usepackage{amsmath} 
\usepackage{latexsym} 
\usepackage{theorem} 
\usepackage{color}
\usepackage{epsfig}
\usepackage{psfrag}
\usepackage{makeidx}
\usepackage{eepic,slashbox}

\theorembodyfont{\itshape} 

\newtheorem{theorem}{Theorem}
\newtheorem{lemma}[theorem]{Lemma}

\newtheorem{corollary}[theorem]{Corollary}
\newtheorem{proposition}[theorem]{Proposition}

\newtheorem{definition}[theorem]{Definition}

\newtheorem{remark}[theorem]{Remark}
\newenvironment{proof}{{\par\addvspace{0.1cm}\noindent \bf Proof. }}{\hfill$\Box$\par\medskip} 
 

\def\RR{\mathbb{R}}
\def\CC{\mathbb{C}}

\def\vect#1{\mbox{\boldmath $#1$}}






\title{Pompeiu's theorem and the moduli space of triangles}

\author{Jun O'Hara\footnote{Supported by JSPS KAKENHI Grant Number 16K05136.}}

\begin{document}

\maketitle

\begin{abstract} 
We introduce a kind of converse of Pompeiu's theorem. Fix an equilateral triangle $\triangle A_0B_0C_0$, then for any triangle $\triangle ABC$ there is a unique point $P$ inside the circumcircle $\Gamma_0$ of $\triangle A_0B_0C_0$ such that a triangle with edge lengths $PA_0, PB_0$, and $PC_0$ is similar to $\triangle ABC$. It follows that an open disc inside $\Gamma_0$ can be considered as a moduli space of similarity classes of triangles.  We show that it is essentially equivalent to another moduli space based on a shape function of triangles which has been used in preceding studies. 
\end{abstract}

\medskip
{\small {\it Key words and phrases}. Pompeiu's theorem, moduli space, similarity classes, triangle. 
}

{\small 2010 {\it Mathematics Subject Classification}: 51M04.}

\section{Introduction}

We identify $\RR^2$ with $\CC$ and fix an equilateral triangle $\triangle A_0B_0C_0$, where $A_0=i,B_0=-e^{\pi i/6}$ and $C_0=e^{-\pi i/6}$. 
Let $\Gamma_0$ be the circumcircle and $\mathcal{D}$ a unit open disc with center the origin. 
F. van Schooten's theorem states that for any point $P$ on $\Gamma_0$, edges with lengths $|P-A_0|, |P-B_0|$, and $|P-C_0|$ form a degenerate triangle, i.e. the greatest is the sum of the other two (\cite{F} pp. 62\,--\,64). On the other hand, Pompeiu's theorem states 

\begin{theorem}\label{thm_Pompeiu} 
{\rm (Pompeiu \cite{P})} For any point $P$ in $\RR^2\setminus\Gamma_0$ there is a triangle with side lengths $|P-A_0|, |P-B_0|$, and $|P-C_0|$. 
\end{theorem}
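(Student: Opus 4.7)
The plan is to realize an honest (non-degenerate) triangle whose three sides have the prescribed lengths $|P-A_0|$, $|P-B_0|$, $|P-C_0|$ by means of a rotation trick that goes back to the classical proofs of this theorem.

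First I would introduce the rotation $\rho$ of the plane by angle $\pi/3$ about $A_0$, choosing the orientation so that $\rho(B_0) = C_0$ (which is possible because $\triangle A_0 B_0 C_0$ is equilateral; with the coordinates fixed in the introduction the counter-clockwise rotation does the job). Set $P' = \rho(P)$ and consider the triangle $\triangle P P' C_0$. Its side lengths can be read off directly: since $|A_0P|=|A_0P'|$ and $\angle P A_0 P' = \pi/3$, the triangle $\triangle A_0 P P'$ is itself equilateral, whence $|P-P'| = |P-A_0|$; since $\rho$ is an isometry sending $B_0$ to $C_0$, $|P'-C_0| = |P-B_0|$; and the third side $|P-C_0|$ is already in the desired form. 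Thus whenever $\triangle P P' C_0$ is non-degenerate, its sides realize the three lengths in the statement.

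The decisive step is to verify that $\triangle P P' C_0$ is non-degenerate precisely when $P \notin \Gamma_0$. One direction is van Schooten's theorem, already cited in the introduction. For the converse I would chase angles: if $P, P', C_0$ are collinear, then $\angle A_0 P C_0$ equals $\angle A_0 P P' = \pi/3$ (up to supplement), which in turn coincides with the inscribed angle subtending the chord $A_0 C_0$ in $\Gamma_0$, forcing $P \in \Gamma_0$.

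The main obstacle I anticipate is the orientation and position bookkeeping: the three possible ways for $P, P', C_0$ to be collinear correspond to the three arcs of $\Gamma_0$ cut off by the vertices, which is exactly the trichotomy one sees in van Schooten. A cleaner route that avoids this case analysis is to invoke Ptolemy's inequality
\[
|P-A_0|\cdot |B_0-C_0| \;\leq\; |P-B_0|\cdot |A_0-C_0| \,+\, |P-C_0|\cdot |A_0-B_0|,
\]
which, for the equilateral $\triangle A_0 B_0 C_0$, collapses to $|P-A_0| \leq |P-B_0|+|P-C_0|$, with equality iff $P,B_0,A_0,C_0$ are concyclic in this order; cyclically permuting the roles of $A_0, B_0, C_0$ yields the other two triangle inequalities, and all three are strict exactly when $P \notin \Gamma_0$.
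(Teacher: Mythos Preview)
Your argument is correct: both the rotation construction and the Ptolemy inequality route are the two standard proofs of Pompeiu's theorem, and you have stated them accurately. One small quibble: with the specific coordinates $A_0=i$, $B_0=-e^{\pi i/6}$, $C_0=e^{-\pi i/6}$, the rotation about $A_0$ carrying $B_0$ to $C_0$ is clockwise, not counter-clockwise; but since you hedged with ``choosing the orientation so that $\rho(B_0)=C_0$'' this does not affect the argument.

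However, there is nothing to compare against: the paper does \emph{not} prove Theorem~\ref{thm_Pompeiu}. It is quoted as a classical result of Pompeiu (with the reference \cite{P}) and used only as motivation for the converse statement that the paper actually establishes (Theorem~\ref{main_thm1}). So your proposal is a valid self-contained proof of a theorem the paper simply cites.
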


We show a kind of converse of this theorem: for any triangle $\triangle ABC$ there is a unique point $P$ in $\mathcal{D}$ such that if a triangle $\triangle A'B'C'$ satisfies $|B'-C'|=|P-A_0|, |C'-A'|=|P-B_0|$, and $|A'-B'|=|P-C_0|$ then $\triangle A'B'C'$ is similar to $\triangle ABC$ (Theorem \ref{main_thm1} (1)). 
Thus the open disc $\mathcal{D}$ can be considered as the moduli space of the space of similarity classes of triangles, denoted by $\widetilde{\mathcal{S}}$. 
In other words, we obtain a bijection $\phi_P$ from $\widetilde{\mathcal{S}}$ to $\mathcal{D}$. 
Let us call the pair $(\mathcal{D}, \phi_P)$ the {\em Pompeiu moduli space of similarity classes of triangles} and denote it by $\mathcal{D}_P$. 
We remark that the center of $\mathcal{D}$ corresponds to equilateral triangles and that as a point in $\mathcal{D}$ approaches the boundary circle $\Gamma_0$ the corresponding triangle tends to a degenerate one. 

On the other hand, there is another way to identify $\widetilde{\mathcal{S}}$ with $\mathcal{D}$, which has appeared in \cite{L1, NO0, HMS, H}. 
We identify $\RR^2$ with $\CC$ and assume that the vertices of triangles are labeled in an anti-clockwise order. 
Given a triangle $\triangle ABC$ let $\sigma=\sigma(\triangle ABC)$ be the cross ratio 
\begin{equation}\label{def_sigma}
\sigma=(\infty,B;C,A)=\frac{(\infty-C)(B-A)}{(\infty-A)(B-C)}=\frac{A-B}{C-B}\in\CC. 
\end{equation}
This correspondence induces a bijection, denoted by the same symbol $\sigma$, from $\widetilde{\mathcal{S}}$ to the upper half plane $\mathcal{H}_+$. 
Put $\rho=e^{\pi i/3}$, the primitive $6$th root of $1$.  
Define a map from the set of triangles $\mathcal{T}$ to $\CC$ by 
\begin{equation}\label{def_phi}
\phi(\triangle ABC)=\frac{A+\rho^2B+\rho^{4}C}{A+\rho^{4}B+\rho^{2}C}
=\frac{\sigma-\rho}{\sigma-\bar\rho}\, ,
\end{equation}
which induces a bijection from $\widetilde{\mathcal{S}}$ to $\mathcal{D}$, denoted by the same symbol $\phi$. 
We remark that $\sigma$ is a modification of the {\em shape functions} given in \cite{L1}, $\phi^3$ has appeared in \cite{NO0,NO1} and $\phi$ in \cite{HMS,H}. 

We show that 
$\phi\circ{\phi_P}^{-1}$ can be expressed as $\phi\circ{\phi_P}^{-1}(z)=i\bar z$ 
(Theorem \ref{main_thm2}), namely, the moduli space of similarity classes of triangles obtained via the converse Pompeiu's theorem is essentially equivalent to that by the shape function. 

Let us introduce some corollaries. 
First, we can define two kinds of operations on the space of similarity classes of triangles, $R_\theta$ and $H_\lambda$, which serve as the rotation by angle $\theta$ and the homothety by factor $e^\lambda$ on the moduli space $\mathcal{D}_P$. 
Fixing the edge $BC$, the rotation $R_\theta$ can be given using a circle of Apollonius with foci $\Omega_A$ and $\overline{\Omega}_A$, where $\Omega_A$ and $\overline{\Omega}_A$ are two vertices of equilateral triangles which have $BC$ as one of the edges, whereas the homothety $H_\lambda$ can be given using a circle that passes through $\Omega_A, \overline{\Omega}_A$, and $A$ (Figure \ref{fig_operations}). 
We give M\"obius geometric meaning of $\theta$ and $\lambda$ as the arc-lengths of geodesics in the de Sitter space which can be identified with the set of oriented circles. 
We remark that the result of Nakamura and Oguiso \cite{NO0} implies that the rotation $R_\theta$ can also be realized by operations illustrated in Figures \ref{fig_T_q}, \ref{fig_S_q}, and \ref{fig_R_pq}, where the definitions will be stated later in the last section.

Next, suppose we work in $\RR^3$. 
Let $\Pi$ be a plane that contains $\triangle A_0B_0C_0$, $\Gamma_0$ the circumcircle of $\triangle A_0B_0C_0$, $O$ the center, and $\Sigma$ a sphere that has $\Gamma_0$ as equator. 
For a non-equilateral triangle $\triangle ABC$ put $S$ to be the set of points $P$ in $\RR^3$ such that a triangle $\triangle A'B'C'$ with $|B'-C'|=|P-A_0|, |C'-A'|=|P-B_0|$, and $|A'-B|'=|P-C_0|$ is similar to $\triangle ABC$. 
Then $S$ is a circle of Apollonius in a plane $\Pi^\perp$ through $O$ that is orthogonal to $\Pi$ with foci $\Pi^\perp\cap\Gamma_0$, in other words, $S$ is a circle which is invariant under both an inversion in $\Sigma$ and a reflection in $\Pi$.

\section{Preliminaries}
\subsection{Notations and definitions}\label{subsection_def}
We identify $\RR^2$ with $\CC$ and assume that the vertices of triangles are labeled anti-clockwisely. 

We say that two triangles $\triangle ABC$ and $\triangle A'B'C'$ are {\em similar}, denoted by $\triangle ABC\sim\triangle A'B'C'$, if $\triangle ABC$ can be mapped to $\triangle A'B'C'$ by a composition of homothety, rotation, and translation. 
In other words, our similarity means the one preserving the order of vertices and the orientation of the triangle.  

Let $\mathcal{T}$ be the set of triangles, $\widetilde{\mathcal{S}}$ the space of similarity classes of triangles, $\widetilde{\mathcal{S}}=\mathcal{T}/\sim$, and $\mbox{\rm pr}$ the projection from $\mathcal{T}$ to $\widetilde{\mathcal{S}}$. 
The similarity class of a triangle $\triangle ABC$ is denoted by $[\triangle ABC]$. 
Any triangle $\triangle ABC$ with $|B-C|=a, |C-A|=b$, and $|A-B|=c$ 
will be denoted by $\triangle (a,b,c)$. 

We use $\mathcal{D}$ for a unit open disc of $\CC$ with center the origin, $\Gamma_0=\partial\mathcal{D}$ for a unit circle with center the origin, and $\mathcal{H}_+$ (or $\mathcal{H}_-$) for the upper (or respectively lower) half plane of $\CC$. 

Let $\rho$ denote the primitive $6$th root of $1$, $\rho=e^{\pi i/3}$. 

A circle with center $\eta$ and radius $r$ will be denoted by $\Gamma(\eta,r)$. 
An inversion in a circle $\Gamma$ will be denoted by $I_{\Gamma}$. 
\[\displaystyle I_{\Gamma(\eta,r)}\colon z\mapsto \eta+\frac{r^2(z-\eta)}{{|z-\eta|}^2}=\eta+\frac{r^2}{\bar z-\bar\eta}\,. \]

\subsection{Apollonius circles and pencils of circles}\label{subsection_pencils}

Let $\Omega$ and $\bar\Omega$ be a pair of points. The set of points $P$ such that $|P-\Omega|:|P-\bar\Omega|$ is constant is a circle called a {\em circle of Apollonius with foci $\Omega$ and $\bar\Omega$}, which may be the bisector of $\Omega\bar\Omega$ as a special case. 
The set of circles of Apollonius is called the {\em Poncelet pencil} ({\em hyperbolic pencil}) of circles with {\em limit points} ({\em Poncelet points}) $\Omega$ and $\bar\Omega$. 
The set of circles that pass through $\Omega$ and $\bar\Omega$ is called a {\em pencil with base points $\Omega$ and $\bar\Omega$} or {\em elliptic pencil}. 
Every circle of Apollonius intersects every circle through $\Omega$ and $\bar\Omega$ orthogonally. 
By an inversion $I$ in a circle with center $\bar\Omega$, these two pencils are mapped to the set of concentric circles with center $I(\Omega)$ and the set of straight lines through $I(\Omega)$ (Figure \ref{pencil}). 

\begin{figure}[htbp]
\begin{center}
\includegraphics[width=.5\linewidth]{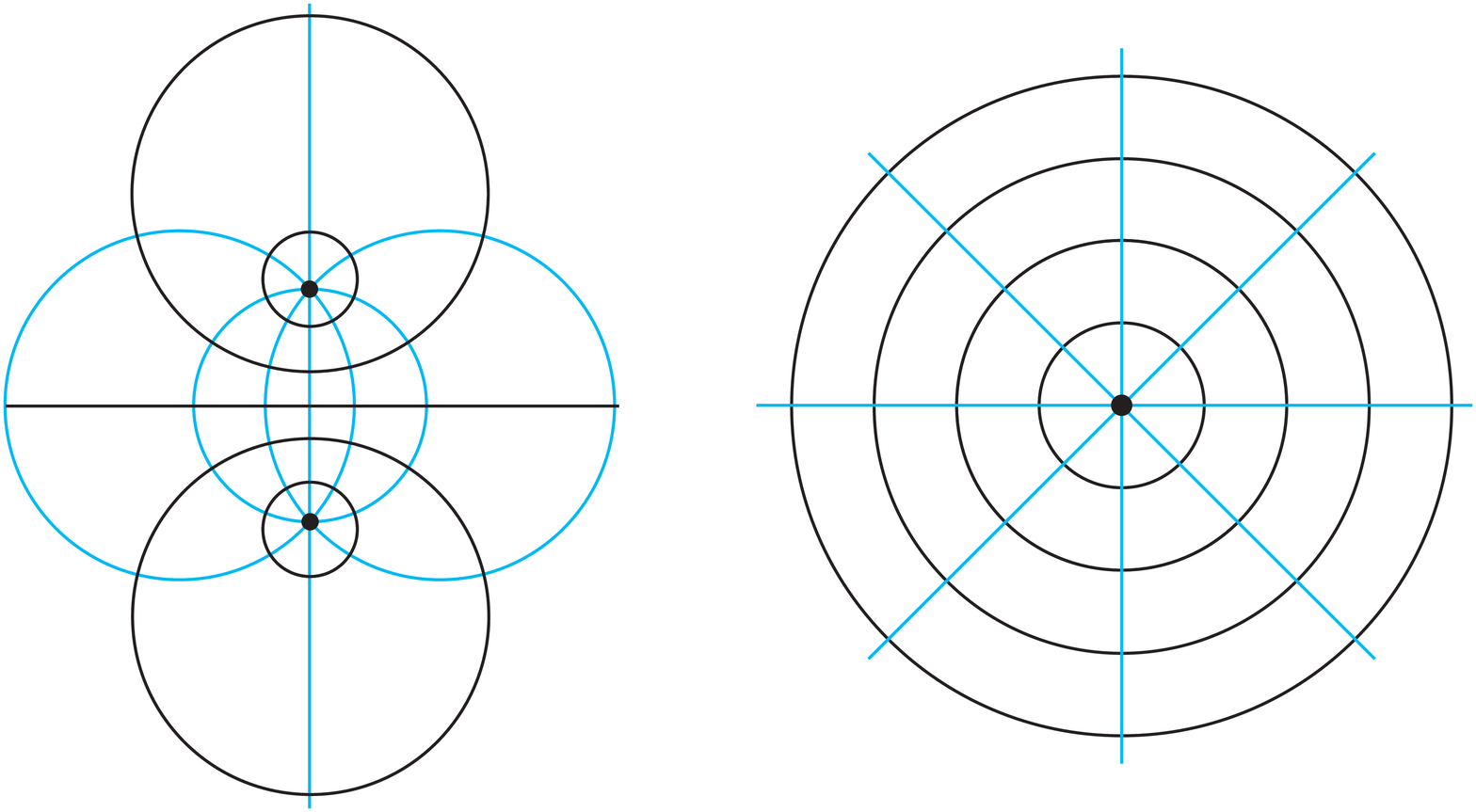}
\caption{A pair of a hyperbolic pencil (black) and an elliptic pencil (blue) of circles. }
\label{pencil}
\end{center}
\end{figure}

\section{Pompeiu moduli space}
%
We fix an equilateral triangle $\triangle A_0B_0C_0$, where $A_0=i,B_0=-e^{\pi i/6}$ and $C_0=e^{-\pi i/6}$, which has the unit circle $\Gamma_0=\partial\mathcal{D}$ as the circumcircle. 

\begin{theorem}\label{main_thm1}
Let $\triangle ABC$ be any triangle. 
\begin{enumerate}
\item There is a unique point $P$ in $\mathcal{D}$ such that 
$\triangle(|P-A_0|, |P-B_0|, |P-C_0|) \sim \triangle ABC$. 
\item When $\triangle ABC$ is not equilateral, there is a unique point $P'$ in the exterior of $\mathcal{D}$ that satisfies $\triangle(|P'-A_0|, |P'-B_0|, |P'-C_0|) \sim \triangle ABC$. 
\item When $\triangle ABC$ is not equilateral, the point $P'$ is the image of $P$ under an inversion in $\Gamma_0$. 
\end{enumerate}
\end{theorem}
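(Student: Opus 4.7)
The plan is to recast the existence of $P$ as finding the intersection of two Apollonius circles, and then to pin down the two intersection points by explicit algebra. Writing $a = |B-C|$, $b = |C-A|$, $c = |A-B|$, a point $P$ satisfies the required similarity condition exactly when $(|P-A_0|, |P-B_0|, |P-C_0|) = (ta, tb, tc)$ for some $t > 0$; equivalently, $P$ lies on both the Apollonius circle with foci $A_0, B_0$ and ratio $a/b$ and the one with foci $B_0, C_0$ and ratio $b/c$. Since $\Gamma_0$ passes through all three of $A_0, B_0, C_0$, it belongs to both relevant elliptic pencils and is therefore orthogonal to both Apollonius circles. Hence both Apollonius circles, and consequently their intersection, are invariant under the inversion $I_{\Gamma_0}$; this observation is what will deliver (3) once existence and uniqueness are in hand.

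To handle existence and uniqueness, I would work in coordinates $P = x + iy$ with the prescribed $A_0, B_0, C_0$. The three squared distances share the leading term $|P|^2$, so signed combinations produce the linear identities
\[
2\sqrt{3}\, x = |P-B_0|^2 - |P-C_0|^2, \qquad 6y = |P-B_0|^2 + |P-C_0|^2 - 2|P-A_0|^2,
\]
together with $|P-A_0|^2 + |P-B_0|^2 + |P-C_0|^2 = 3|P|^2 + 3$. Substituting $(ta, tb, tc)$ expresses $x$ and $y$ as $t^2$ times fixed polynomials in $a, b, c$, and reduces the last identity to $|P|^2 = St^2 - 1$ with $S = (a^2+b^2+c^2)/3$. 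Imposing $x^2 + y^2 = |P|^2$ then collapses the whole system to a single quadratic $Dt^4 - St^2 + 1 = 0$ in $t^2$, with $D$ an explicit polynomial in $a, b, c$. The key calculation is the discriminant: expansion plus Heron's formula will give $S^2 - 4D = \tfrac{16}{3}\mathcal{A}^2$, where $\mathcal{A}$ is the area of $\triangle ABC$. For non-equilateral $\triangle ABC$, both $D>0$ and $\mathcal{A}>0$, so two distinct positive roots $t_1^2, t_2^2$ appear, giving two points $P_1, P_2$; Vieta then yields $|P_1|^2|P_2|^2 = (St_1^2-1)(St_2^2-1) = 1$, so exactly one of them lies in $\mathcal{D}$ and the other outside. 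Neither can lie on $\Gamma_0$: that would force $|P_1|=|P_2|=1$, hence a double root and $\mathcal{A}=0$. For equilateral $\triangle ABC$, $D = 0$ and the equation degenerates to $St^2 = 1$, yielding the unique solution $P = 0 \in \mathcal{D}$ and no exterior point, matching (1) and (2).

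For (3), the identity $|1/\bar{P} - X| = |P - X|/|P|$ holds for any $X$ on $\Gamma_0$ (a direct check using $|X| = 1$), so the inversion $P \mapsto 1/\bar{P}$ in $\Gamma_0$ preserves the three ratios $|P-A_0| : |P-B_0| : |P-C_0|$. In particular $I_{\Gamma_0}(P)$ is also a solution, so by the uniqueness in (1) and (2) it must coincide with $P'$. The main obstacle is the discriminant computation $S^2 - 4D = \tfrac{16}{3}\mathcal{A}^2$: the polynomial manipulation is routine but it is precisely the step that ties the non-degeneracy of $\triangle ABC$ (positive area) to the existence of two real solutions, and without it the quadratic in $t^2$ could in principle have no real root.
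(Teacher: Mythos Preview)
Your argument is correct, but it is genuinely different from the paper's. The paper works synthetically: it takes the two Apollonius circles $\Gamma_b,\Gamma_c$ (with foci $A_0,B_0$ and $A_0,C_0$), applies the inversion $I_{\Gamma(A_0,\sqrt3)}$ centred at $A_0$ through $B_0,C_0$, and observes that the images are ordinary circles centred at $B_0,C_0$ with radii $\sqrt3\,b/a$ and $\sqrt3\,c/a$. The triangle inequalities for $a,b,c$ are then exactly the condition that these two circles meet in two points, one on each side of the line $B_0C_0$; pulling back by $I$ (which sends that line to $\Gamma_0$) gives $P\in\mathcal D$ and $P'$ outside. Part (3) is done the same way you do it, via the distance law for inversions.

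Your route trades the inversion trick for explicit algebra: the linear identities in $x,y$, the reduction to the quadratic $Dt^4-St^2+1=0$, and the discriminant identity $S^2-4D=\tfrac{16}{3}\mathcal A^2$. This buys you more: you get closed formulas for $P$ and $P'$, the Vieta computation $|P|\,|P'|=1$ drops out directly (an independent confirmation of (3) at the level of moduli), and the link between non-degeneracy of $\triangle ABC$ and the existence of two real solutions is made completely explicit via Heron. In spirit this is closer to the paper's proof of Theorem~\ref{main_thm2} than to its proof of Theorem~\ref{main_thm1}. The paper's approach, by contrast, is shorter and shows transparently why the triangle inequality is precisely what is needed.
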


\begin{proof}
Although (1) follows from the proof of Theorem \ref{main_thm2}, we give an independent proof here. 

(1), (2) 
Suppose $a=|B-C|, b=|C-A|$, and $c=|A-B|$. 
Let $\Gamma_b$ and $\Gamma_c$ be circles of Apollonius with foci $A_0,B_0$ and $A_0,C_0$ respectively given by \setlength\arraycolsep{1pt}
\[\begin{array}{rcl}
\Gamma_b&=&\{X\,:\,|X-A_0|:|X-B_0|=a:b\},\\[1mm]
\Gamma_c&=&\{X\,:\,|X-A_0|:|X-C_0|=a:c\}. 
\end{array}
\]
Then if there is a point $P$ that satisfies $|P-A_0|:|P-B_0|:|P-C_0|=a:b:c$ then $P$ is in the intersection of $\Gamma_b$ and $\Gamma_c$. 

Let $I=I_{\Gamma(A_0,\sqrt3)}$ be an inversion in a circle with center $A_0$ that passes through both $B_0$ and $C_0$. 
Then a simple computation shows that $I(\Gamma_b)$ is a circle with center $B_0$ and radius $\sqrt3\,b/a$, and $I(\Gamma_c)$ is a circle with center $C_0$ and radius $\sqrt3\,c/a$. 
Since three edges with lengths $a,b$, and $c$ form a triangle, $I(\Gamma_b)$ and $I(\Gamma_c)$ intersect in a pair of points, say $\widetilde P$ and $\widetilde P'$, with $\widetilde P$ (or $\widetilde P'$) being below (or respectively, above) the line $B_0C_0$. 
Taking the pre-image by $I$, it follows that $\Gamma_b$ and $\Gamma_c$ intersect in a pair points $I(\widetilde P)$ and $I(\widetilde P')$. 
Since $I(\mathcal{D})$ is the lower half plane below the line $B_0C_0$, $I(\widetilde P)$ is in $\mathcal{D}$ and $I(\widetilde P')$ in the exterior. 
Now we can put $P=I(\widetilde P)$ and $P'=I(\widetilde P')$. 
Remark that if $\triangle ABC$ is equilateral then $\widetilde P'=A_0$ and hence $I(\widetilde P')=\infty$. 
It completes the proof of (1) and (2). 

\medskip
(3) Let $I_0=I_{\Gamma_0}$ be an inversion in $\Gamma_0$. 
Recall that the distance between a pair of points satisfies 
$|I_0(P)-I_0(Q)|={|P|}^{-1}{|Q|}^{-1}|P-Q| $ for $P,Q\ne0$. Since $A_0, B_0$, and $C_0$ are invariant under $I_0$, 
\[
|I_0(P)-A_0|:|I_0(P)-B_0|:|I_0(P)-C_0|=|P-A_0|:|P-B_0|:|P-C_0|,
\]
which implies 
\[
\triangle(|I_0(P)-A_0|, |I_0(P)-B_0|, |I_0(P)-C_0|) \sim \triangle(|P-A_0|, |P-B_0|, |P- C_0|). 
\]
\end{proof}

\begin{definition} \rm 
Define a bijection $\phi_P$ from the space of similarity classes of triangles $\widetilde{\mathcal{S}}$ to the unit open disc $\mathcal{D}$ by 
\begin{equation}\label{def_phi_P}
{\phi_P}^{-1}(P)=\left[\triangle(|P-A_0|, |P-B_0|, |P-C_0|)\right], 
\end{equation}
and call the pair $(\mathcal{D}, \phi_P)$ the {\em Pompeiu moduli space} of similarity classes of triangles and denote it by $\mathcal{D}_P$. 
\end{definition}

\begin{remark}\rm 
The statement (3) can also be proved as follows. 

Let $I_{\overline{B_0C_0}}$ be the reflection in a line $\overline{B_0C_0}$. 
Since $\widetilde P'=I_{\overline{B_0C_0}}\,\big(\widetilde P\,\big)$ we have 
\[
P'=I(\widetilde P')=I_{\Gamma(A_0,\sqrt3)}\left(I_{\overline{B_0C_0}}\,\big(\widetilde P\,\big)\right)
=I_{I_{\Gamma(A_0,\sqrt3)}(\overline{B_0C_0})}\left(I_{\Gamma(A_0,\sqrt3)}\big(\widetilde P\,\big)\right)
=I_{\Gamma_0}(P). 
\]
\end{remark}

\begin{corollary}\label{cor_3-dim}
Let $\Pi$ be a plane in $\RR^3$, $\triangle A_0B_0C_0$ an equilateral triangle in $\Pi$, and $\Gamma_0$ the circumcircle of $\triangle A_0B_0C_0$. 
Then for any non-equilateral triangle $\triangle ABC$ the set $S$ of points $P$ in $\RR^3$ such that 
$\triangle(|P-A_0|, |P-B_0|, |P-C_0|)\sim\triangle ABC$ is a circle with diameter $PP'$ that intersects $\Pi$ orthogonally, where $P$ and $P'$ are the points given in Theorem \ref{main_thm1}. 

Let $\Gamma_{ABC}$ be this circle and $\Pi^\perp_{ABC}$ the plane that contains $\Gamma_{ABC}$. 
Then $\Gamma_{ABC}$ is a circle of Apollonius with foci $\Gamma_0\cap\Pi_{ABC}^\perp$. 
\end{corollary}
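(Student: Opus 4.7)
The plan is to mimic the 2D proof of Theorem \ref{main_thm1} in $\RR^3$. Setting $a=|B-C|$, $b=|C-A|$, and $c=|A-B|$, I will express $S$ as the intersection of two three-dimensional Apollonius loci
\[
\Sigma_b=\{X\in\RR^3:|X-A_0|:|X-B_0|=a:b\},\qquad
\Sigma_c=\{X\in\RR^3:|X-A_0|:|X-C_0|=a:c\},
\]
each of which is a sphere, or (when the relevant ratio equals $1$) the perpendicular bisector plane of its two foci. In either case $\Sigma_b$ is the surface of revolution obtained by rotating the planar Apollonius circle $\Gamma_b$ from the proof of Theorem \ref{main_thm1} about the line $A_0B_0\subset\Pi$, and similarly for $\Sigma_c$; in particular both surfaces are invariant under the reflection in $\Pi$.

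Next, I would show that $S$ is a circle in the plane $\Pi^\perp_{ABC}$ with $PP'$ as a diameter. Restricting to $\Pi$ reduces the problem to the planar case, so $S\cap\Pi=\Gamma_b\cap\Gamma_c=\{P,P'\}$, and these two points are distinct because $\triangle ABC$ is not equilateral. The intersection of two distinct spheres, or of a sphere and a plane, in $\RR^3$ whose centres and supporting lines lie in $\Pi$ is either empty, a tangency point, or a circle whose supporting plane is perpendicular to $\Pi$; since $S$ already contains the two distinct points $P,P'$ we obtain a circle, and its supporting plane must contain the line through $P$ and $P'$ and be perpendicular to $\Pi$, hence coincide with $\Pi^\perp_{ABC}$. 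The reflection in $\Pi$ preserves $S$ and restricts on $\Pi^\perp_{ABC}$ to the reflection in the line $\Pi\cap\Pi^\perp_{ABC}$, i.e.\ the line through $P$ and $P'$; since $S$ is invariant, its centre lies on this line, which forces $PP'$ to be a diameter.

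For the last assertion, let $\{Q_1,Q_2\}=\Gamma_0\cap\Pi^\perp_{ABC}$; since $\Gamma_0\subset\Pi$, these are precisely the two intersections of the line through $P$ and $P'$ with $\Gamma_0$. By Theorem \ref{main_thm1}(3) the points $P$ and $P'$ are inverse with respect to $\Gamma_0$, and a short signed-length computation along that line (as in Section \ref{subsection_pencils}) shows this to be equivalent to the harmonic relation $(Q_1,Q_2;P,P')=-1$. I will then invoke the classical characterization that a circle is an Apollonius circle with foci $F_1,F_2$ iff it meets the line $F_1F_2$ in a pair of points harmonically conjugate to $F_1,F_2$: since $\Gamma_{ABC}$ meets the line $Q_1Q_2$ exactly at its diameter endpoints $P,P'$, it is the Apollonius circle in $\Pi^\perp_{ABC}$ with foci $Q_1,Q_2$.

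The main technical point is the sphere-intersection step of the middle paragraph: one needs to confirm that $S$ really is a circle, that its supporting plane is exactly $\Pi^\perp_{ABC}$, and that $PP'$ is a diameter, while uniformly handling the degenerate case in which $a$ equals $b$ or $c$ and one of $\Sigma_b,\Sigma_c$ is a plane. Once the circular geometry of $S$ is pinned down, the Apollonius identification is a routine consequence of the harmonic-conjugate and inversion properties already recorded in the preliminaries.
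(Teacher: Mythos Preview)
Your proof is correct and follows essentially the same approach as the paper: you intersect the two three-dimensional Apollonius loci (the paper's $S_1,S_2$), note their centres lie on $\Pi$ to get a circle orthogonal to $\Pi$ through $P,P'$, and then deduce the Apollonius property from Theorem~\ref{main_thm1}(3). The only cosmetic difference is that the paper states the Apollonius conclusion via the ratio equality $|P-W|:|P-W'|=|P'-W|:|P'-W'|$ directly, whereas you phrase the same fact through the equivalent harmonic-conjugate criterion; you are also more explicit about the degenerate case where one locus is a plane, which the paper suppresses.
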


Thus, if we denote a sphere which has $\Gamma_0$ as the equator by $\Sigma$,  the set $\widetilde{\mathcal{S}}$ of similarity classes can be identified with the set of circles that are invariant under both the inversion $\Sigma$ and reflection in $\Pi$. 

\begin{proof}
Put $a=|B-C|, b=|C-A|$, and $c=|A-B|$. 
The set $S$ is the intersection of the set $S_1$ of points $P$ in $\RR^3$ such that $|P-A_0|:|P-B_0|=a:b$ and the set $S_2$ of points $P$ in $\RR^3$ such that $|P-A_0|:|P-C_0|=a:c$. 
Since $S_1\cap\Pi$ (or $S_2\cap\Pi$) is a circle of Apollonius with foci $A_0$ and $B_0$ (or $A_0$ and $C_0$ respectively), $S_1$ (or $S_2$) is a sphere with center on $\Pi$, which implies the first statement. 

The second statement is a consequence of Theorem \ref{main_thm1} (3). 
In fact, if we put $\Gamma_0\cap\Pi_{ABC}^\perp=\{W,W'\}$ then $|P-W|:|P-W'|=|P'-W|:|P'-W'|$. 
\end{proof}

Next we proceed to show that Pompeiu moduli space $\mathcal{D}_P$ is essentially equivalent to the moduli space $\mathcal{D}$ explained in Introduction. 

\begin{lemma} \label{key_lemma}
Let $\{a,b,c\}$ be a triple of positive numbers that satisfies three triangle inequalities. 
\begin{enumerate}
\item We have $\sigma(\triangle(a,b,c))=X+iY$, where \setlength\arraycolsep{1pt}
\begin{eqnarray}
X&=&\displaystyle \frac{a^2+c^2-b^2}{2a^2}, \label{eq_X}\\[4mm]
Y&=&\displaystyle \frac{\sqrt{(a+b+c)(-a+b+c)(a-b+c)(a+b-c)}}{2a^2} \label{eq_Y1}\\[2mm]
&=&\displaystyle \frac{\sqrt{-a^4-b^4-c^4+2a^2b^2+2b^2c^2+2c^2a^2\phantom{{l}^{l}}\!\!\!}}{2a^2}. \label{eq_Y2}
\end{eqnarray}
\item We have 
\begin{equation}\label{phi(triangle(a,b,c))}
\phi(\triangle(a,b,c))=\frac{-2a^2+b^2+c^2+\sqrt3\left(b^2-c^2\right)i}{a^2+b^2+c^2+\sqrt3\sqrt{-a^4-b^4-c^4+2a^2b^2+2b^2c^2+2c^2a^2\phantom{{l}^{l}}\!\!\!}}.
\end{equation}
\end{enumerate}
\end{lemma}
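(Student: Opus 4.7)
The plan is to compute $\sigma(\triangle(a,b,c))$ directly from its defining cross-ratio, and then to obtain the formula for $\phi$ by substitution into the second expression in (\ref{def_phi}).

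For (1), I would start from $\sigma = (A-B)/(C-B)$. Its modulus is $|A-B|/|C-B| = c/a$, and, because the vertices are labeled counterclockwise, its argument equals the interior angle $\beta$ at vertex $B$. Hence $\sigma = (c/a)(\cos\beta + i\sin\beta)$, so $X = (c/a)\cos\beta$ and $Y = (c/a)\sin\beta$. The formula for $X$ follows immediately from the law of cosines $\cos\beta = (a^2+c^2-b^2)/(2ac)$. For $Y$, I would use $Y = (c/a)\sin\beta = 2S/a^2$, where $S$ is the area of the triangle, combined with Heron's formula $16 S^2 = (a+b+c)(-a+b+c)(a-b+c)(a+b-c)$; this yields (\ref{eq_Y1}), and the equivalence with (\ref{eq_Y2}) is a routine expansion of the Heron product.

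For (2), I would substitute $\sigma = X+iY$ into $\phi = (\sigma-\rho)/(\sigma-\bar\rho)$ with $\rho = \tfrac{1}{2}+\tfrac{\sqrt{3}}{2}i$ and $\bar\rho = \tfrac{1}{2}-\tfrac{\sqrt{3}}{2}i$, and multiply numerator and denominator by the conjugate of the denominator. Writing $p := X - \tfrac{1}{2}$, a direct multiplication gives
\[
\phi = \frac{p^2 + Y^2 - \tfrac{3}{4} - i\sqrt{3}\,p}{p^2 + \left(Y + \tfrac{\sqrt{3}}{2}\right)^2}.
\]
By (1), $p = (c^2-b^2)/(2a^2)$, so the term $-i\sqrt{3}\,p$ already produces the imaginary part $\sqrt{3}(b^2-c^2)/(2a^2)$ of the answer. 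The key simplification is that $(2a^2)^2(p^2+Y^2) = (c^2-b^2)^2 + (-a^4-b^4-c^4+2a^2b^2+2b^2c^2+2c^2a^2)$, in which the $b^4$, $c^4$ and $b^2c^2$ terms cancel, leaving $a^2(-a^2+2b^2+2c^2)$. Inserting this collapses the real part of the numerator to $(-2a^2+b^2+c^2)/(2a^2)$ and the denominator to $(a^2+b^2+c^2)/(2a^2) + \sqrt{3}\,Y$; clearing the common factor $1/(2a^2)$ from both yields (\ref{phi(triangle(a,b,c))}).

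The calculations are elementary, so the only real obstacle is the algebraic bookkeeping in (2) — in particular, noticing the cancellation that reduces $p^2+Y^2$ to a simple multiple of $a^2$. Once that cancellation is spotted, the rest of the derivation is purely mechanical.
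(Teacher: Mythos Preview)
Your proposal is correct and follows essentially the same route as the paper: part (1) via the law of cosines and Heron's formula, and part (2) by substituting $\sigma=X+iY$ into $\phi=(\sigma-\rho)/(\sigma-\bar\rho)$ and rationalizing to obtain the intermediate expression $\phi=\big((X-\tfrac12)^2+Y^2-\tfrac34-\sqrt3(X-\tfrac12)i\big)/\big((X-\tfrac12)^2+(Y+\tfrac{\sqrt3}2)^2\big)$, which is exactly the formula the paper records. The paper simply states that the result follows by substitution, whereas you have spelled out the key cancellation $(2a^2)^2(p^2+Y^2)=a^2(-a^2+2b^2+2c^2)$; this is helpful detail but not a different argument.
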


\begin{proof}
(1) The equality \eqref{eq_X} follows from the cosine formula and \eqref{eq_Y1} from Heron's formula. 

\medskip
(2) The equality \eqref{phi(triangle(a,b,c))} can be obtained by substituting  \eqref{eq_X} and \eqref{eq_Y2} into 
\[
\phi=\frac{\sigma-\rho}{\sigma-\bar\rho}
=\frac{\left(X-\frac12\right)^2+Y^2-\frac34-\sqrt3\left(X-\frac12\right)i}{\left(X-\frac12\right)^2+\left(Y+\frac{\sqrt3}2\right)^2}.
\]
\end{proof}

\begin{theorem}\label{main_thm2}
Two bijections from the space of similarity classes of triangles $\widetilde{\mathcal{S}}$ to the unit open disc $\mathcal{D}$, $\phi$ given by \eqref{def_phi} and $\phi_P$ given by \eqref{def_phi_P} satisfy $\phi\circ{\phi_P}^{-1}(z)=i \bar z$. 
\end{theorem}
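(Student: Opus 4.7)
Fix $z \in \mathcal{D}$ and set $a = |z - A_0|$, $b = |z - B_0|$, $c = |z - C_0|$, so that $\phi_P^{-1}(z) = [\triangle(a,b,c)]$. The proof amounts to substituting these three distances into the explicit formula \eqref{phi(triangle(a,b,c))} of Lemma \ref{key_lemma} and verifying that the result is $i \bar z$. The plan is to exploit that $A_0 = i$, $B_0 = i\rho^2$, $C_0 = i\rho^4$ all lie on the unit circle, so each of $a^2, b^2, c^2$ becomes an affine expression in $z$ and $\bar z$ whose coefficients are simple powers of $\rho$.

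From $|z - w|^2 = |z|^2 + 1 - z \bar w - \bar z w$ for $|w| = 1$, one obtains
\[
a^2 = R + i(z - \bar z), \qquad b^2 = R + i(\rho^4 z - \rho^2 \bar z), \qquad c^2 = R + i(\rho^2 z - \rho^4 \bar z),
\]
where $R := |z|^2 + 1$. Using the relations $1 + \rho^2 + \rho^4 = 0$, $\rho^2 + \rho^4 = -1$, and $\rho^4 - \rho^2 = -i\sqrt{3}$, the three symmetric combinations appearing in the numerator of \eqref{phi(triangle(a,b,c))} collapse at once to
\[
a^2 + b^2 + c^2 = 3R, \qquad -2a^2 + b^2 + c^2 = 6\,\I z, \qquad \sqrt{3}\,(b^2 - c^2) = 6\,\R z.
\]
Hence that numerator equals $6\,\I z + 6 i\,\R z = 6 i \bar z$.

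It remains to compute the quantity under the square root in the denominator, namely $16\,(\mathrm{Area})^2$. Rewriting it as $(a^2+b^2+c^2)^2 - 2(a^4+b^4+c^4)$ and expanding $a^4 + b^4 + c^4$ from the expressions above, the cross-terms again vanish by $1 + \rho^2 + \rho^4 = 0$, leaving $16\,(\mathrm{Area})^2 = 3(1 - |z|^2)^2$. Since $|z| < 1$ the square root is $\sqrt{3}\,(1 - |z|^2)$, so the denominator becomes $3R + 3(1 - |z|^2) = 6$, and the ratio is $i\bar z$ as required. The only step that requires any care is the expansion of $a^4 + b^4 + c^4$; alternatively one can invoke the classical identity that the Pompeiu triangle has area $\tfrac{\sqrt{3}}{4}(1 - |OP|^2)$ for $P$ inside the circumcircle of the fixed equilateral triangle, which shortens the bookkeeping.
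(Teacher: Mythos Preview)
Your argument is correct and follows essentially the same route as the paper's own proof: both compute $a^2,b^2,c^2$ explicitly and substitute into formula \eqref{phi(triangle(a,b,c))} of Lemma~\ref{key_lemma}, the only difference being that the paper works in real coordinates $x,y$ while you streamline the same calculation using $A_0=i$, $B_0=i\rho^2$, $C_0=i\rho^4$ and the relation $1+\rho^2+\rho^4=0$. The final identities you obtain, $-2a^2+b^2+c^2+\sqrt3(b^2-c^2)i=6i\bar z$, $a^2+b^2+c^2=3R$, and $-a^4-b^4-c^4+2a^2b^2+2b^2c^2+2c^2a^2=3(1-|z|^2)^2$, match the paper's computations line for line.
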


\begin{proof}
Let $f$ be a map  from $\mathcal{D}$ to $\mathcal{D}$ defined by 
\[
f(z)=\phi\left(\triangle(|z-A_0|, |z-B_0|, |z-C_0|)\right). 
\]
Let $z=x+yi$ $(x,y\in\RR)$. 
Put $a=|z-A_0|, b=|z-B_0|$, and $c=|z-C_0|$, then 
\setlength\arraycolsep{1pt}
\[
\begin{array}{rcl}
a^2&=&\displaystyle {|z-A_0|}^2=x^2+(y-1)^2,  \\[2mm]
b^2&=&\displaystyle {|z-B_0|}^2=\left(x+\frac{\sqrt3}2\right)^2+\left(y+\frac12\right)^2,   \\[4mm]
c^2&=&\displaystyle {|z-C_0|}^2=\left(x-\frac{\sqrt3}2\right)^2+\left(y+\frac12\right)^2 
\end{array}
\]
which implies 
\[
\begin{array}{rcl}
\displaystyle -2a^2+b^2+c^2+\sqrt3\left(b^2-c^2\right)i &=& 6y+6xi, \\[2mm]
a^2+b^2+c^2 &=& 3(x^2+y^2+1), \\[2mm]
-a^4-b^4-c^4+2a^2b^2+2b^2c^2+2c^2a^2 &=& 3\left(1-x^2-y^2\right)^2. 
\end{array}
\]
Substituting the above to \eqref{phi(triangle(a,b,c))} we obtain 
$f(x+yi)=y+xi$, namely $f$ is a bijection given by $f(z)=i \bar z$. 
(We remark that this gives an alternative proof of Theorem \ref{main_thm1} (1) since $\phi$ induces a bijection between ${\mathcal{S}}$ and ${\mathcal{D}}$.) 

Now we can see $f=\phi\circ{\phi_P}^{-1}$, which completes the proof. 
\end{proof}

\section{Two operations that generate similarity classes}
\subsection{Rotation and homothety of the moduli space}
Let $R_\theta$ and $H_\lambda$ be two two operations on $\widetilde{\mathcal{S}}$ which correspond to the rotation by $\theta$ and homothety by $e^\lambda$ in the moduli space $\mathcal{D}_P$ through the bijection $\phi_P$. 
Then they generate $\widetilde{\mathcal{S}}$, to be precise, if we fix a non-equilateral triangle $\triangle A_1B_1C_1$ then for any triangle $\triangle ABC$ there are unique $\theta$ $(0\le\theta<2\pi)$ and $\lambda$ $\,(-\infty\le\lambda<-\log\left|\phi_P([\triangle A_1B_1C_1])\right|\,)$ such that $[\triangle ABC]=H_\lambda R_\theta([\triangle A_1B_1C_1])$. 
The operations $R_\theta$ and $H_\lambda$ 
satisfy 
\[
R_\theta\circ H_\lambda=H_\lambda\circ R_\theta, \hspace{0.4cm} R_{\theta}\circ R_{\theta'}=R_{\theta+\theta'}, \hspace{0.4cm} H_\lambda\circ H_{\lambda'}=H_{\lambda+\lambda'}.
\]

Fixing an edge $BC$, we can describe the operations $R_\theta$ and $H_\lambda$ using pencils of circles introduced in Subsection \ref{subsection_pencils}. 
Let $\Omega_A$ and $\overline\Omega_A$ be given by $\Omega_A=B+\rho(C-B)$ and $\overline\Omega_A=B+\bar\rho(C-B)$ so that $\triangle \Omega_ABC$ and $\triangle \overline\Omega_ACB$ are equilateral triangles.  

Note that a circle with center the origin and a radial ray from the origin in Pompeiu moduli space $\mathcal{D}_P$ correspond to a circle of Apollonius in the upper half plane $\mathcal{H}_+$ with foci $\rho, \bar\rho$ and a subset of a circle through $\rho$ and $\bar\rho$ from $\rho$ to the real axis in $\mathcal{H}_+$ respectively. 
In fact, the bijection $\sigma\circ{\phi_P}^{-1}$ from $\mathcal{D}_P$ to $\mathcal{H}_+$ is given by 
\[
\sigma\circ{\phi_P}^{-1}=(\sigma\circ{\phi}^{-1})\circ(\phi\circ{\phi_P}^{-1})\colon
z\mapsto\frac{\bar\rho(i\bar z)-\rho}{(i\bar z)-1}\,,
\]
which is a M\"obius transformation of $\RR^2\cup\{\infty\}$ that maps $0$ and $\infty$ to $\rho$ and $\bar\rho$ respectively. 
Therefore it maps hyperbolic pencil and elliptic pencil defined by $0$ and $\infty$ to those defined by $\rho$ and $\bar\rho$ (see Subsection \ref{subsection_pencils}). 
Therefore we have

\begin{proposition}
If we fix the edge $BC$, the rotation $R_\theta$ can be realized by moving the vertex $A$ along a circle of Apollonius with foci $\Omega_A$ and $\overline{\Omega}_A$, whereas the homothety $H_\lambda$ can be realized by moving the vertex $A$ along a circle through $\Omega_A$ and $\overline{\Omega}_A$ as illustrated in Figure \ref{fig_operations}. 
\end{proposition}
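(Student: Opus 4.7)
The plan is to reduce the statement to the $\sigma$-plane description given immediately before the proposition, via the auxiliary map
\[
\psi\colon A\mapsto \sigma(\triangle ABC)=\frac{A-B}{C-B},
\]
in which $B$ and $C$ are held fixed. This $\psi$ is a direct similarity of $\CC$ (translation by $-B$, rotation by $-\arg(C-B)$, dilation by $1/|C-B|$), so it is in particular a Möbius transformation that preserves the class of pencils of circles.

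The first computation I would carry out is the identification $\psi(\Omega_A)=\rho$ and $\psi(\overline{\Omega}_A)=\bar\rho$, which is immediate from $\Omega_A=B+\rho(C-B)$ and $\overline{\Omega}_A=B+\bar\rho(C-B)$. Because $\psi$ is a similarity it preserves ratios of distances, so
\[
\frac{|A-\Omega_A|}{|A-\overline{\Omega}_A|}=\frac{|\psi(A)-\rho|}{|\psi(A)-\bar\rho|}
\]
for every $A$. Consequently $\psi$ maps Apollonius circles with foci $\Omega_A,\overline{\Omega}_A$ bijectively onto Apollonius circles with foci $\rho,\bar\rho$, and (since $\psi$ also sends circles to circles) it maps circles through $\Omega_A,\overline{\Omega}_A$ bijectively onto circles through $\rho,\bar\rho$. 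In short, $\psi$ intertwines the Poncelet and elliptic pencils with limit/base points $\{\Omega_A,\overline{\Omega}_A\}$ with those having limit/base points $\{\rho,\bar\rho\}$.

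The second step is to invoke the identification established just before the proposition: $\sigma\circ\phi_P^{-1}\colon\mathcal{D}_P\to\mathcal{H}_+$ is a Möbius transformation sending $0$ and $\infty$ to $\rho$ and $\bar\rho$, so it sends concentric circles around the origin of $\mathcal{D}_P$ (the $R_\theta$-orbits) to Apollonius circles with foci $\rho,\bar\rho$ in $\mathcal{H}_+$, and radial rays from the origin (the $H_\lambda$-orbits) to arcs of circles through $\rho$ and $\bar\rho$. Pulling these loci back from the $\sigma$-plane to the $A$-plane via $\psi^{-1}$ and using the intertwining from the previous paragraph, the $R_\theta$-orbit of $[\triangle ABC]$ corresponds, with $B,C$ fixed, to an Apollonius circle in the $A$-plane with foci $\Omega_A,\overline{\Omega}_A$, and the $H_\lambda$-orbit corresponds to an arc of a circle through $\Omega_A,\overline{\Omega}_A$. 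This is the content of the proposition.

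I do not foresee a serious obstacle; the only point that warrants a line of care is the orientation issue ($\rho\in\mathcal{H}_+$, $\bar\rho\in\mathcal{H}_-$, while $\Omega_A$ and $\overline{\Omega}_A$ lie on opposite sides of $\overline{BC}$), but each pencil is determined by its unordered pair of special points, so the statement is insensitive to which of $\Omega_A,\overline{\Omega}_A$ is which. One could also note as a sanity check that the degenerate member of the Poncelet pencil, the perpendicular bisector of $\Omega_A\overline{\Omega}_A$, is the line $\overline{BC}$ itself, which is exactly the locus of degenerate triangles on which $\phi_P$ tends to $\Gamma_0$.
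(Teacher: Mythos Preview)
Your proposal is correct and follows essentially the same route as the paper: the paper's proof is the short paragraph immediately preceding the proposition, which computes $\sigma\circ\phi_P^{-1}$, observes it is a M\"obius map sending $0,\infty$ to $\rho,\bar\rho$, and concludes that the concentric/radial pencils at the origin go to the hyperbolic/elliptic pencils at $\rho,\bar\rho$. You have simply made explicit the final step the paper leaves tacit, namely that the similarity $\psi(A)=(A-B)/(C-B)$ carries the pencils at $\{\Omega_A,\overline\Omega_A\}$ in the $A$-plane to those at $\{\rho,\bar\rho\}$ in the $\sigma$-plane.
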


\begin{figure}[htbp]
\begin{center}
  \psfrag{A}{$A$}
  \psfrag{B}{$B$}
  \psfrag{C}{$C$}
  \psfrag{a}{$A'$}
  \psfrag{s}{$\Omega_A$}
  \psfrag{t}{$\overline{\Omega}_A$}
  \psfrag{d}{$A''$}
  \psfrag{w}{$\theta$}
\includegraphics[width=.36\linewidth]{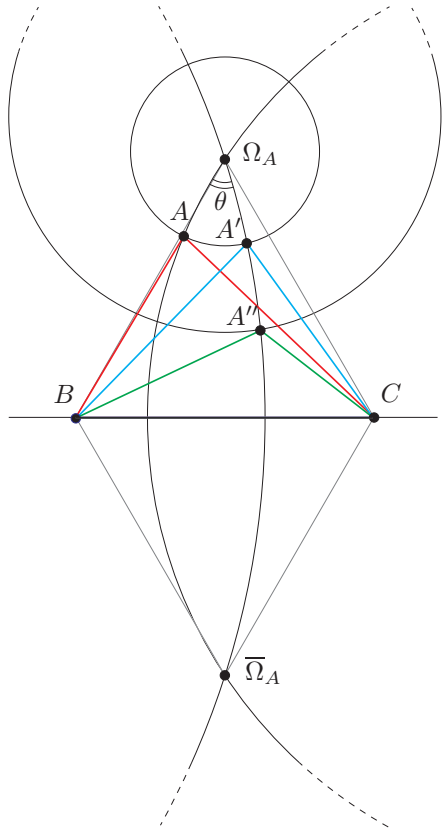}
\caption{Operation realizing $R_\theta$ and $H_\lambda$ such that $H_\lambda\circ R_\theta$ maps $[\triangle ABC]$ to $[\triangle A''BC]$}
\label{fig_operations}
\end{center}
\end{figure}

\subsection{M\"obius geometric interpretation}
The parameter $\theta$ and $\lambda$ have the following M\"obius geometric meaning. 

We work in the Minkowski space $\RR^4_1$ with Lorentz metric, i.e with a pseudo-inner product $\langle \>,\>\rangle$ with signature $-,+,+,+$ given by 
\[
\langle \vect x,\vect y\rangle=-x_0y_0+x_1y_1+x_2y_2+x_3y_3, \hspace{0.4cm}\vect x=(x_0,x_1,x_2,x_3), \vect y=(y_0,y_1,y_2,y_3).
\]
Then $\RR^2\cup\{\infty\}$ can be identified with the set of lines in the light cone through the origin, and a circle (we remark that a line is considered as a circle that passes through $\infty$ in our setting) $\Gamma$ in $\RR^2$ with the intersection of the light cone with a time-like $3$-dimensional vector subspace $\Pi_\Gamma$. 
Let us fix an orientation of $\Gamma$, which fix an orientation of $\Pi_\Gamma$. 
By taking the positive unit normal vector to $\Pi_\Gamma$, any oriented circle (or line) can be identified with a point in the de Sitter space $\Lambda=\{\vect x\,|\,\langle \vect x,\vect x\rangle=1\}$, which is a ``{\sl unit sphere}'' with respect to the Lorentz metric. 
Let $\gamma$ denote a point in $\Lambda$ that corresponds to an oriented circle $\Gamma$. 

The Poncelet pencil of circles with limit points $\Omega$ and $\overline\Omega$ is a set of Apollonius circles with foci  $\Omega$ and $\overline\Omega$. 
Let $\mbox{Span}\langle\Omega,\overline\Omega\rangle$ be a $2$-dimensional vector subspace of $\RR^4_1$ spanned by two lines in the light cone that correspond to $\Omega$ and $\overline\Omega$. 
Then the Poncelet pencil corresponds to the intersection 
\[\Lambda\cap\mbox{Span}\langle\Omega,\overline\Omega\rangle
=\{\vect x\in\mbox{Span}\langle\Omega,\overline\Omega\rangle\,:\, \langle \vect x,\vect x\rangle=1\}.\]
The restriction of the Lorentz metric of $\RR^4_1$ to $\mbox{Span}\langle\Omega,\overline\Omega\rangle$ is again Lorentzian (indefinite), i.e. $\mbox{Span}\langle\Omega,\overline\Omega\rangle$ is a Minkowski plane. 
The intersection $\Lambda\cap\mbox{Span}\langle\Omega,\overline\Omega\rangle$ consists of two hyperbolas, which are time-like geodesics in $\Lambda$. 

Suppose oriented circles $\Gamma_1$ and $\Gamma_2$ in this pencil\footnote{We assume that $\Gamma_1$ and $\Gamma_2$ have the ``same '' orientation so that the corresponding points $\gamma_1$ and $\gamma_2$ in $\Lambda$ belong th the same connected component.} 
are mapped to concentric circles such that the ratio of the radii is equal to $e^\lambda$ by an inversion in a circle with center $\Omega$ (or $\overline\Omega$). 
Then the corresponding points $\gamma_1$ and $\gamma_2$ in $\Lambda$ satisfy $\langle \gamma_1,\gamma_2\rangle=\cosh\lambda$. 
The parameter $\lambda$ serves as the arc-length along this geodesic. 

The pencil of circles with base points $\Omega$ and $\overline\Omega$ is a set of circles that pass both $\Omega$ and $\overline\Omega$. 
Let $\big(\mbox{Span}\langle\Omega,\overline\Omega\rangle\big)^\perp$ be the pseudo-orthogonal complement of $\mbox{Span}\langle\Omega,\overline\Omega\rangle$. 
Then the pencil of circles with base points $\Omega$ and $\overline\Omega$ corresponds to the intersection 
\[\Lambda\cap\big(\mbox{Span}\langle\Omega,\overline\Omega\rangle\big)^\perp
=\big\{\vect x\in\big(\mbox{Span}\langle\Omega,\overline\Omega\rangle\big)^\perp\,:\, \langle \vect x,\vect x\rangle=1\big\}.\]
The restriction of the Lorentz metric of $\RR^4_1$ to $\big(\mbox{Span}\langle\Omega,\overline\Omega\rangle\big)^\perp$ is Riemannian (positive definite). 
The intersection $\Lambda\cap\big(\mbox{Span}\langle\Omega,\overline\Omega\rangle\big)^\perp$ is a circle, which is a space-like geodesic in $\Lambda$. 

Suppose oriented circles $\Gamma_1$ and $\Gamma_2$ intersect in angle $\theta$. Then the corresponding points $\gamma_1$ and $\gamma_2$ in $\Lambda$ satisfy $\langle \gamma_1,\gamma_2\rangle=\cos\theta$. 
The parameter $\theta$ serves as the arc-length along this geodesic. 

\begin{figure}[htbp]
\begin{center}
\begin{minipage}{.47\linewidth}
\begin{center}
\includegraphics[width=0.6\linewidth]{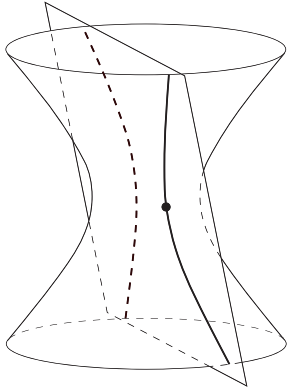}
\caption{Time-like geodesic in de Sitter space $\Lambda$}
\label{geodesic-t}
\end{center}
\end{minipage}
\hskip 0.1cm
\begin{minipage}{.47\linewidth}
\begin{center}
\includegraphics[width=0.6\linewidth]{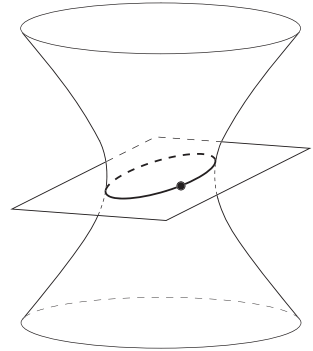}
\caption{Space-like geodesic in de Sitter space $\Lambda$}
\label{geodesic-s}
\end{center}
\end{minipage}
\end{center}
\end{figure}

The reader is referred to \cite{LO} or Chapters 13 and 14 of \cite{L}, for example, for the details. 

\section{Equidivision and Routh operations}\label{section_remaks_operations}
We introduce the results of Nakamura-Oguiso \cite{NO0} to realize the rotation $R_\theta$ of the moduli space by kinds of operations given as follows. 

The first one is a ``{\em $q$-equidivision operation}'' $T_q$ which maps $\triangle ABC$ to $\triangle A''B''C''$, where $A'', B''$, and $C''$ are given by 
\begin{equation}\label{def_equidivision_operator}
A''=qB+(1-q)C, \>\> B''=qC+(1-q)A, \>\> C''=qA+(1-q)B. 
\end{equation}
The second one is a ``{\em $q$-Routh operation}'' $T^R_{q}$ $(q\ne\frac12)$ which maps $\triangle ABC$ to $\triangle A'B'C'$, where $A', B'$, and $C'$ are intersection points of pairs of lines $AA''$ and $BB''$, $BB''$ and $CC''$, and $CC''$ and $AA''$ respectively. 
These two operations are also studied in \cite{H}. 
The third one, denoted by $T_{p,q}$, is a generalization of the above two. 
Put \[
A''_p=(1-p)B+pC, \>\> B''_p=(1-p)C+pA, \>\> C''_p=(1-p)A+pB,
\]
then $T_{p,q}$ maps $\triangle ABC$ to $\triangle A'B'C'$, where $A', B'$, and $C'$ are intersection points of pairs of lines $AA''$ and $BB''_p$, $BB''$ and $CC''_p$, and $CC''$ and $AA''_p$ respectively. 
Remark that the first two operations can be obtained as special cases of $T_{p,q}$. 
In fact, a $q$-equidivision operation $T_q$ corresponds to the case when $p=0$ and a $q$-Routh operation $T^R_{q}$ to $p=1-q$. 

\begin{figure}[htbp]
\begin{center}
\begin{minipage}{.45\linewidth}
\begin{center}
  \psfrag{A}{$A$}
  \psfrag{B}{$B$}
  \psfrag{C}{$C$}
  \psfrag{d}{$A''$}
  \psfrag{e}{$B''$}
  \psfrag{f}{$C''$}
  \psfrag{q}{$q$}
  \psfrag{Q}{$1-q$}
\includegraphics[width=0.8\linewidth]{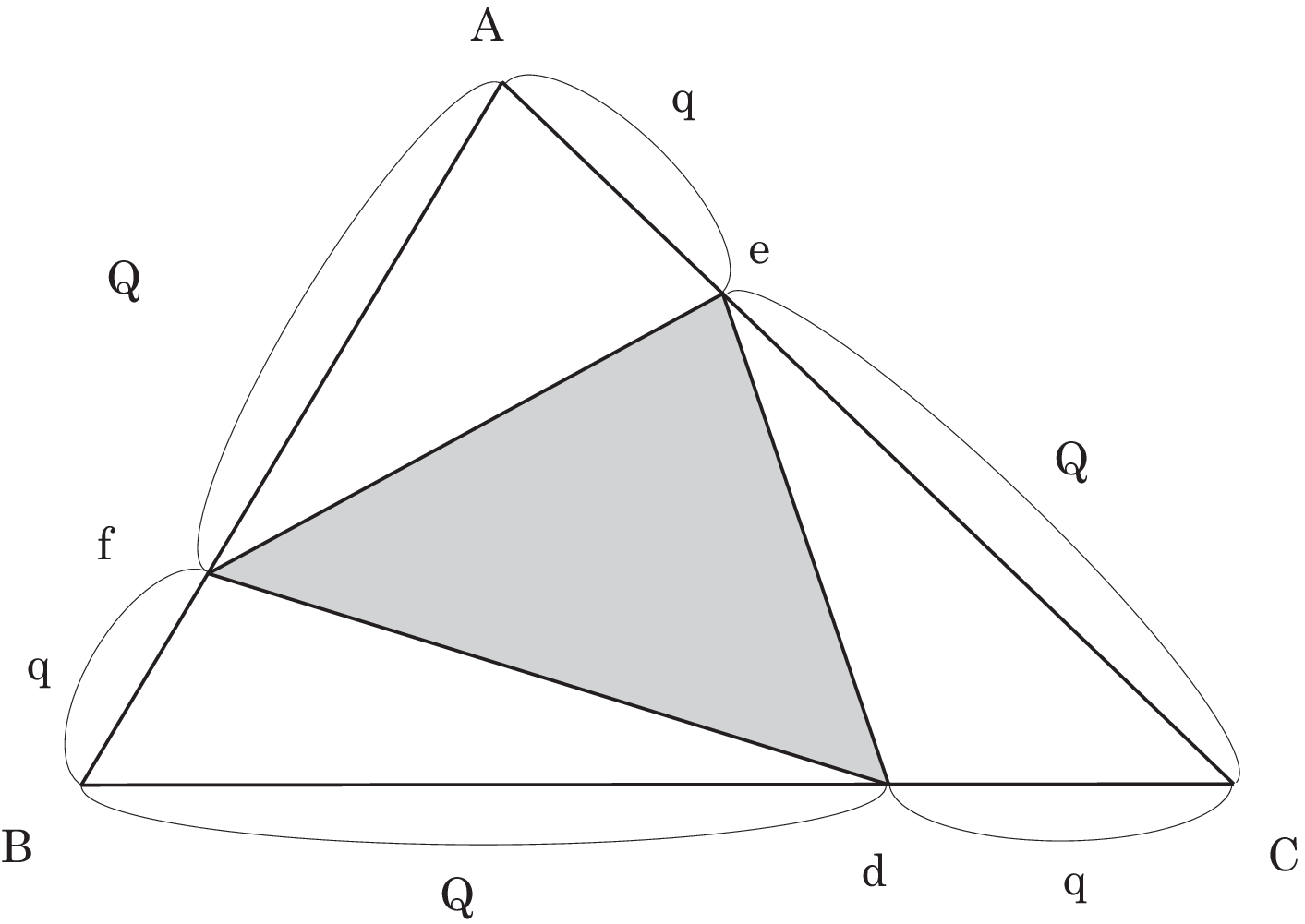}
\caption{$q$-equidivision operation $T_q$}
\label{fig_T_q}
\end{center}
\end{minipage}
\hskip 0.4cm
\begin{minipage}{.45\linewidth}
\begin{center}
  \psfrag{A}{$A$}
  \psfrag{B}{$B$}
  \psfrag{C}{$C$}
  \psfrag{a}{$A'$}
  \psfrag{b}{$B'$}
  \psfrag{c}{$C'$}
  \psfrag{d}{$A''$}
  \psfrag{e}{$B''$}
  \psfrag{f}{$C''$}
  \psfrag{p}{$p$}
  \psfrag{q}{$q$}
  \psfrag{Q}{$1-q$}
\includegraphics[width=0.8\linewidth]{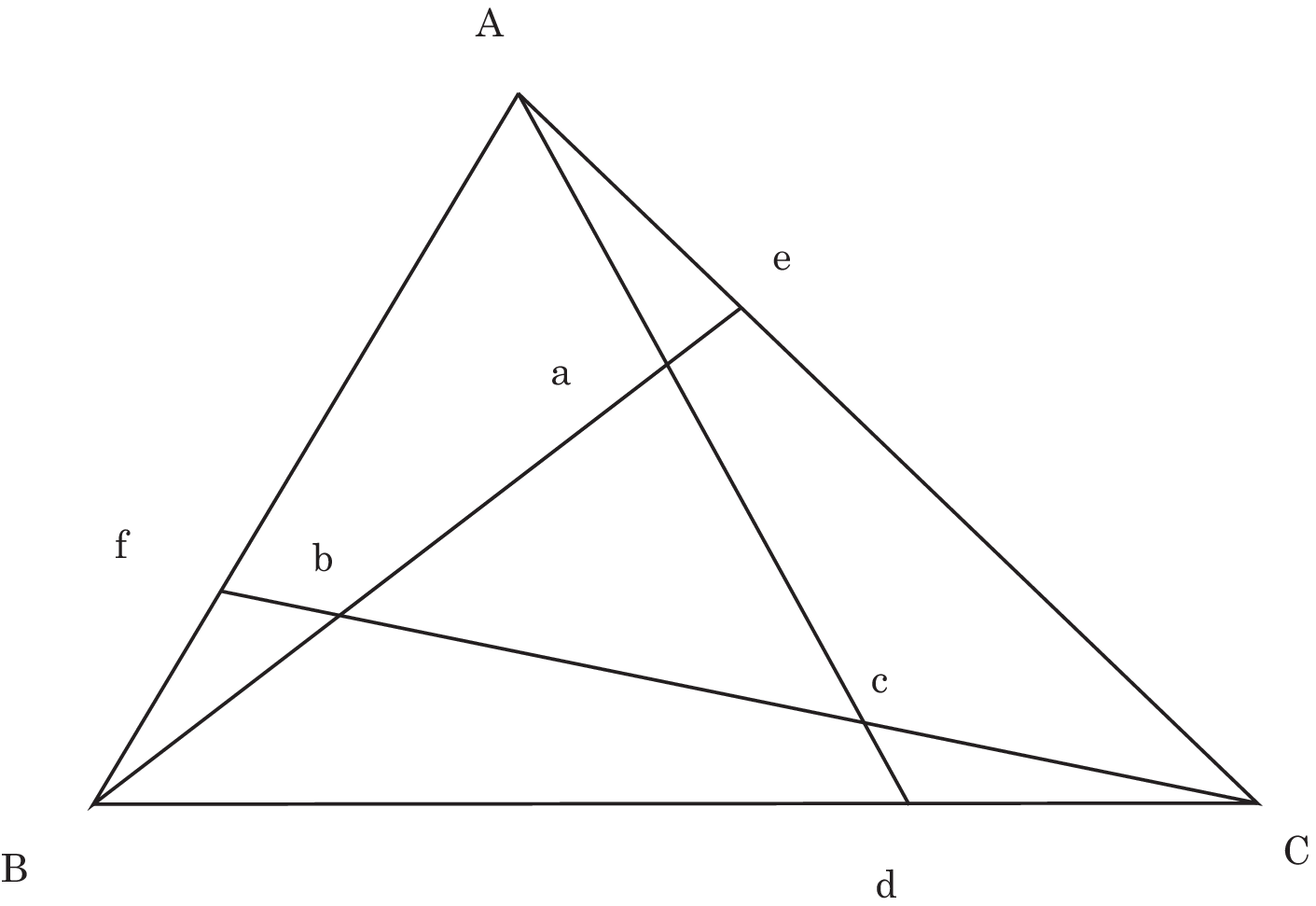}
\caption{Routh operation $T^R_{q'}$ $(q'\ne\frac12)$}
\label{fig_S_q}
\end{center}
\end{minipage}
\end{center}
\end{figure}

\begin{figure}[htbp]
\begin{center}
  \psfrag{A}{$A$}
  \psfrag{B}{$B$}
  \psfrag{C}{$C$}
  \psfrag{a}{$A'$}
  \psfrag{b}{$B'$}
  \psfrag{c}{$C'$}
  \psfrag{d}{$A''$}
  \psfrag{e}{$B''$}
  \psfrag{f}{$C''$}
  \psfrag{D}{$A''_p$}
  \psfrag{E}{$B''_p$}
  \psfrag{F}{$C''_p$}
  \psfrag{p}{$p$}
  \psfrag{q}{$q$}

\includegraphics[width=0.4\linewidth]{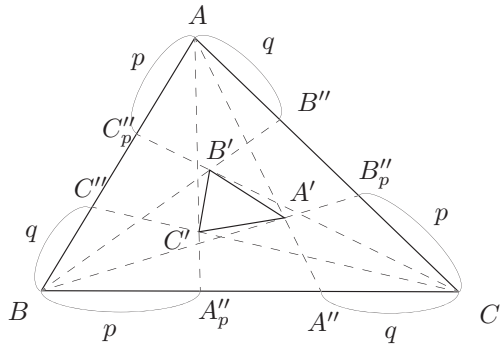}
\caption{The operation $T_{p,q}$}
\label{fig_R_pq}
\end{center}
\end{figure}

The proof of Theorem 1 of \cite{NO0} implies 
\begin{theorem}\label{thm_NO}
{\rm (\cite{NO0}\footnote{We remark that Nakamura Oguiso studied ``{\em shape similarity}\,''of triangles, i.e. similarity without labeling vertices, and hence they used $\phi^3$ to construct the moduli space. This is the reason of the difference of the coefficients, $6$ in \cite{NO0} and $2$ here.})} 
We have
\begin{equation}\label{argument_T_pq}
\phi([T_{p,q}\triangle ABC])=e^{i\theta(p,q)}\phi([\triangle ABC]),
\end{equation}
where 
\begin{equation}\label{theta_pq}
\theta(p,q)=2\arg\{(p-1)(2q-1)\rho-(q-1)(2p-1)\}.
\end{equation}
\end{theorem}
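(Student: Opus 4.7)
The plan is to exploit the cyclic structure of the construction of $T_{p,q}$ together with the explicit formula \eqref{def_phi} for $\phi$. Since $A'$ is the intersection of the cevians $AA''$ and $BB''_p$, and $B'$, $C'$ arise by cyclically permuting $(A,B,C)$ in this construction, I would write $A' = \alpha A + \beta B + \gamma C$ with real coefficients summing to $1$; cyclicity then forces $B' = \gamma A + \alpha B + \beta C$ and $C' = \beta A + \gamma B + \alpha C$.

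Next, set $\omega := \rho^2 = e^{2\pi i/3}$, a primitive cube root of unity, so that $\phi([\triangle ABC]) = (A+\omega B+\omega^2 C)/(A+\omega^2 B+\omega C)$. Using $\omega^3 = 1$, a direct computation shows
\begin{equation*}
A' + \omega B' + \omega^2 C' \;=\; \bigl(\alpha + \omega\gamma + \omega^2\beta\bigr)\bigl(A + \omega B + \omega^2 C\bigr),
\end{equation*}
and similarly for the denominator of $\phi$ with $\omega$ and $\omega^2$ swapped. Since $\alpha,\beta,\gamma\in\RR$, the two scalar factors are complex conjugates, hence
\begin{equation*}
\phi([T_{p,q}\triangle ABC]) \;=\; \frac{z}{\bar z}\,\phi([\triangle ABC]), \qquad z := \alpha + \omega\gamma + \omega^2\beta,
\end{equation*}
and the multiplier has modulus $1$ and argument $2\arg z$.

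It remains to compute $\alpha,\beta,\gamma$ and to identify $\arg z$ with $\arg\{(p-1)(2q-1)\rho - (q-1)(2p-1)\}$. Parametrising the lines $AA''$ and $BB''_p$ and matching coefficients of $A,B,C$ in their common point yields
\begin{equation*}
\alpha = \frac{p(1-q)}{1-pq},\quad \beta = \frac{q(1-p)}{1-pq},\quad \gamma = \frac{(1-p)(1-q)}{1-pq}.
\end{equation*}

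The decisive step is the final algebraic identity. Using the relations $\omega = \rho - 1$ and $\omega^2 = -\rho$ obtained from $\rho^2 - \rho + 1 = 0$, one should find that $(1-pq)\,z$ collapses precisely to $(p-1)(2q-1)\rho - (q-1)(2p-1)$; since $1-pq$ is real, this transfers to $\arg z$, giving $\theta(p,q) = 2\arg z$ and completing the argument. The main obstacle is essentially this conversion from expressions in the cube root of unity $\omega$ to expressions in the sixth root of unity $\rho$: the cancellation between the three monomials in $p$, $q$, and $pq$ is not visible before the substitution, but once $\omega^2 = -\rho$ is used to eliminate the quadratic term the matching with Theorem \ref{thm_NO} is routine bookkeeping.
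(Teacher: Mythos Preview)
Your argument is correct. The paper itself does not supply a proof of Theorem~\ref{thm_NO}; it merely quotes the result from \cite{NO0}, so there is no in-paper proof to compare against. Your approach---recognising that a cyclic affine construction acts on the Fourier components $A+\omega B+\omega^2 C$ and $A+\omega^2 B+\omega C$ by complex-conjugate scalars, so that $\phi$ is multiplied by $z/\bar z$---is exactly the standard mechanism behind results of this type (cf.\ also \cite{N,D,S}), and your barycentric computation of $(\alpha,\beta,\gamma)$ and the reduction of $(1-pq)z$ to $(p-1)(2q-1)\rho-(q-1)(2p-1)$ both check out. One small remark: you note that $1-pq$ being real suffices for the transfer of arguments; strictly speaking a negative real factor shifts $\arg z$ by $\pi$, but since the formula involves $2\arg z$ this is harmless modulo $2\pi$, so no extra care is needed.
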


It follows that the equidivision operation $T_q$ and the Routh operation $T^R_{q'}$ correspond to rotations by $2\arctan\left(\sqrt3\,(2q-1)\right)$ and $2\arctan\left(\sqrt3\,q'/(2-q')\right)$ respectively in the moduli space $(\mathcal{D},\phi)$. 
Therefore, given $\theta$, the values $q, q'$ so that $T_q$ and $T^R_{q'}$ realize $R_\theta$ are compass-and-straightedge constructible.

\bigskip

Jun O'Hara

Department of Mathematics and Informatics,Faculty of Science, 
Chiba University

1-33 Yayoi-cho, Inage, Chiba, 263-8522, JAPAN.  

E-mail: ohara@math.s.chiba-u.ac.jp

\end{document}